\newtheorem{theorem}{Theorem}[section]
\newtheorem{example}[theorem]{Example}
\newtheorem{proposition}[theorem]{Proposition}
\renewcommand\k{\mathbb{k}}
\DeclareMathOperator\lm{lm}
\DeclareMathOperator\lc{lc}
\DeclareMathOperator\lt{lt}
\title{A parallel Buchberger algorithm for multigraded ideals}
\author{Emil Sköldberg \and Mikael Vejdemo-Johansson \and Jason Dusek}
\begin{document}

\begin{abstract}
  We demonstrate a method to parallelize the computation of a Gröbner
  basis for a homogenous ideal in a multigraded polynomial ring. Our
  method uses anti-chains in the lattice $\mathbb N^k$ to separate
  mutually independent S-polynomials for reduction.
\end{abstract}

\maketitle

\section{Introduction}
\label{sec:introduction}
In this paper we present a way of parallelizing the Buchberger
algorithm for computing Gr\"obner bases in the special case of
multihomogeneous ideals in the polynomial algebra over a field.
We describe our algorithm as well as our implementation of
it. We also present experimental results on the efficiency of our
algorithm, using the ideal of commuting matrices as illustration.

\subsection{Motivation}
\label{subsec:motivation}
Most algorithms in commutative algebra and algebraic geometry at some
stage involve computing a Gr\"obner basis for an ideal or module.
This ubiquity together with the exponential complexity of the
Buchberger algorithm for computing Gr\"obner bases of homogeneous
ideals explains the large interest in improvements of the basic
algorithm.

\subsection{Prior Work}
\label{subsec:prior_work}
Several approaches have been tried in the literature. Some authors,
such as Chakrabarti--Yelick~\cite{155350} and Vidal~\cite{680307} have
constructed general algorithms for distributed memory and shared
memory machines respectively. Reasonable speedups were achieved on
small numbers of processors. Another approach has been using
factorization of polynomials; all generated S-polynomials are
factorized on a master node, and the reductions of its factors are
carried out on the slave nodes. Work by Siegl~\cite{siegl},
Bradford~\cite{96965}, Gr\"abe and Lassner~\cite{graebe_lassner}.
In a paper by Leykin~\cite{10.1109/ICPPW.2004.1328011} a coarse
grained parallelism was studied that was implemented both in the
commutative and non-commutative case.

Good surveys of the various approaches can be found in papers by
Mityunin and Pankratiev~\cite{MR2464549} and Amrhein,
B{\"u}ndgen and K{\"u}chlin~\cite{MR1624631}. Mityunin and Pankratiev
also give a theoretical analysis of and improvements to algorithms
known at that time.

Finally, an approach by Reeves~\cite{MR1635246} parallelizes on the
compiler level for modular coefficient fields.


\subsection{Our approach}
\label{subsection:our_approach}
Our approach restricts the class of Gröbner bases treated to
homogenous multigraded Gröbner bases. While certainly not general
enough to handle all interesting cases, the multigraded case covers
several interesting examples. For these ideals we describe a coarsely
grained parallelization of the Buchberger algorithm with promising
results.

\begin{example}\label{commutator}
  Set $R=k[x_1,\dots,x_{n^2}, y_1,\dots,y_{n^2}]$ where $k$ is a
  field. Let $X$ and $Y$ be
  square $n\times n$-matrices with entries the variables
  $x_1,\dots,x_{n^2}$ and $y_1,\dots,y_{n^2}$ respectively. Then the
  entries of the matrix
  \[
  I_n = XY - YX
  \]
  form $n^2$ polynomials generating the ideal $I_n$ $R$.

  The computation of a Gröbner basis for $I_1$ and $I_2$ is trivial
  and may be carried out on a blackboard. A Gröbner basis for $I_3$ is
  a matter of a few minutes on most modern computer systems, and
  already the computation of a Gr\"obner basis for $I_4$ is
  expensive using the standard reverse lexicographic
  term order in $R$; the Macaulay2 system~\cite{M2} several hours are
  needed to obtain a Gr\"obner basis with 563 elements. However, using
  clever product orders, Hreinsd\'ottir has been able to find bases with
  293 and 51 elements~\cite{MR1324496,MR2251816}. As far as we are
  aware of, a Gr\"obner basis for $I_5$ is not known.

  By assigning multidegrees $(1,0)$ to all the variables
  $x_1,\dots,x_{n^2}$ and $(0,1)$ to all the variables
  $y_1,\dots,y_{n^2}$, the ideal $I_n$ becomes
  multigraded over $\mathbb N\times\mathbb N$, and thus approachable
  with our methods.
\end{example}

\begin{example}\label{computationaltopology}
  While this paper presents only the approach to multigraded ideals in
  a polynomial ring, an extension to free multigraded modules over
  multigraded rings is easily envisioned, and will be dealt with in
  later work.

  Gröbner bases for such free modules would be instrumental in
  computing invariants from applied algebraic topology such as the
  \emph{rank invariant} as well as more involved normal forms for
  higher dimensional persistence modules.\cite{multiDpersistence}
\end{example}

\section{Partially ordered monoids}
\label{sec:part-order-mono}

We shall recall some definitions and basic facts about partially
ordered sets that will be of fundamental use in the remainder of this
paper.

A partially ordered set is a set equipped with a binary, reflective,
symmetric and transitive \emph{order} operation $\leq$. Two objects
$a,b$ such that either $a\leq b$ or $b\leq a$ are called
\emph{comparable}, and two objects for which neither $a\leq b$ nor
$b\leq a$ are called \emph{incomparable}. A subset $A$ of a partially
ordered set in which all objects are mutually incomparable is called
an \emph{antichain}. An element $p$ is \emph{minimal} if there are no
distinct $q$ with $q\leq p$, \emph{maximal} if there are no distinct
$q$ with $p\leq q$, \emph{smallest} if all other $q$ fulfill $p\leq q$
and \emph{largest} if all other $q$ fulfill $q\leq p$.

There is a partially ordered set structure on $\mathbb N^d$ in which
$(a_1,\dots,a_d) \leq (b_1,\dots,b_d)$ iff $a_i\leq b_i$ for all
$i$. This structure is compatible with the monoid structure on
$\mathbb N^d$ in the sense that if $\bar a \leq \bar b$, then $\bar
c*\bar a\leq\bar c*\bar b$ for $\bar a, \bar b, \bar c\in\mathbb
N^d$. If a monoid has a partial order compatible with the
multiplication in this manner, we call it a \emph{partially ordered
  monoid}.

In a partially ordered set $P$, we say that a subset $Q$ is an
\emph{ideal} if it is downward closed, or in other words if for any
$p\in P, q\in Q$ such that $p\leq q$, then $p\in Q$. It is called a
\emph{filter} if it is upward closed, or if for $p\in P, q\in Q$ such
that $q\leq p$, then $p\in Q$.

An element $p$ is maximal in an ideal if any element $q$ such that
$p\leq q$ is not a member of the ideal. Minimal elements of filters
are defined equivalently. An ideal (filter) is \emph{generated} by its
maximal (minimal) elements in the sense that the membership condition
of the ideal (filter) is equivalent to being larger than (smaller
than) at least one generator. Generators of an ideal or filter form an
antichain. Indeed, if these were not an antichain, two of them would
be comparable, and then one of these two would not be
maximal (minimal). An ideal (filter) is \emph{finitely generated} if it has
finitely many generators, and it is \emph{principal} if it has exactly
one generator.

There is a partially ordered monoid structure on $\mathbb N^d$, given
by $(p_1,\dots,p_d)\leq (q_1,\dots,q_d)$ if $p_i\leq q_i$ for all $i$,
and by $(p_1,\dots,p_d)*(q_1,\dots,q_d) =
(p_1+q_1,\dots,p_d+q_d)$. This structure will be the main partially
ordered monoid in us in this paper.

\section{Multigraded rings and the grading lattice}
\label{sec:mult-rings-grad}

A polynomial ring $R=\k[x_1,\dots,x_r]$ over a field $\k$ is said to
be \emph{multigraded} over $P$ if each variable $x_j$ carries a
\emph{degree} $|x_j|\in P$ for some partially ordered monoid $P$.  We
expect of the partial order on $P$ that if $p, q\in P$ then $p\leq
p*q$ and $q\leq p*q$. The degree extends from variables to entire
monomials by requiring $|mn| = |m|*|n|$ for monomials $m, n$; and from
thence a multigrading of the entire ring $R$ follows by decomposing
$R=\bigoplus_{p\in P} R_p$ where $R_p$ is the set of all
\emph{homogenous} polynomials in $R$ of degree $p$, i.e.\@ polynomials
with all monomials of degree $p$. A homogenous polynomial of degree
$(n_1,\dots,n_d)$ is said to be of \emph{total degree}
$n_1+\dots+n_d$. We note that for the $\mathbb N^d$-grading on $R$,
the only monomial with degree $(0,0,\dots,0)$ is $1$, and thus the
smallest degree is assigned both to the identity of the grading monoid
and to the identity of the ring.

We write $\lm p$, $\lt p$, $\lc p$ for the leading monomial, leading
term and leading coefficient of $p$.

\begin{proposition}\label{prop:gradingdivides}
  Suppose $p$ and $q$ are homogenous. If $|p|\not\leq|q|$ then $\lm p$
  does not divide $\lm q$.
\end{proposition}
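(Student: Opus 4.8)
The plan is to prove the contrapositive: assuming $\lm p$ divides $\lm q$, I will show that $|p| \leq |q|$. First I would invoke the defining property of the multigrading: since $p$ is homogeneous of degree $|p|$, every monomial of $p$ — in particular $\lm p$ — has degree $|p|$, and likewise $\lm q$ has degree $|q|$. So it suffices to show that if a monomial $m$ divides a monomial $n$, then $|m| \leq |n|$ in the partial order on $P$.

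The key step is a short computation with the multiplicativity of the degree function. If $m$ divides $n$, write $n = m \cdot s$ for some monomial $s$. Then $|n| = |m \cdot s| = |m| * |s|$, using the requirement $|mn| = |m|*|n|$ from the definition of the multigrading. Now I would apply the standing hypothesis on $P$ recalled in the previous section — that for all $a, b \in P$ one has $a \leq a*b$ — with $a = |m|$ and $b = |s|$, which gives $|m| \leq |m| * |s| = |n|$. Chaining this with $|\lm p| = |p|$ and $|\lm q| = |q|$ yields $|p| \leq |q|$, as desired.

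The only point requiring a little care is that the factor $s$ with $n = m\cdot s$ is genuinely a monomial in $R$ (so that it has a well-defined degree $|s| \in P$), which is immediate from the definition of monomial divisibility. I do not expect any real obstacle here; the statement is essentially a repackaging of the two axioms $|mn| = |m|*|n|$ and $a \leq a*b$. If one prefers a direct rather than contrapositive phrasing, the same computation shows that $|p| \not\leq |q|$ forces the nonexistence of such an $s$, hence nondivisibility of the leading monomials.
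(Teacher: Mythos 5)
Your proposal is correct and follows essentially the same route as the paper's own proof: write $\lm q = \lm p\cdot s$, use $|\lm q| = |\lm p| * |s|$ and the axiom $a \leq a*b$ to conclude $|\lm p|\leq|\lm q|$, then pass to $|p|\leq|q|$ by homogeneity. If anything, your version is slightly more careful than the paper's, which elides the step identifying $|\lm p|$ with $|p|$.
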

\begin{proof}
  If $\lm p | \lm q$ then $\lm q = c\lm p$, and thus $|\lm q| =
  |c|*|\lm p|$, and thus, since $|1|\leq|c|$, by our requirement for a partially ordered
  monoid, $\lm p\leq\lm q$.
\end{proof}

\begin{proposition}\label{prop:reductiondepends}
  Reduction in the Buchberger algorithm of a given multidegree for a
  homogenous generating set depends
  only on its principal ideal in the partial order of degrees.
\end{proposition}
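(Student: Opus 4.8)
The plan is to track the multidegree through both the \emph{formation} and the \emph{reduction} of S-polynomials and to show that, while working in a fixed multidegree $d$, every generator that can possibly be used has degree $\le d$, i.e.\ lies in the principal ideal $\langle d\rangle=\{\,p\in P : p\le d\,\}$ of the grading monoid $P$ generated by $d$. Concretely, I will show that the new basis elements of degree $d$ obtained while processing degree $d$, together with the critical pairs they create in that degree, are a function of the part of the generating set lying inside $\langle d\rangle$ and of nothing else.

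The whole argument rests on one monotonicity observation, essentially the one underlying the proof of Proposition~\ref{prop:gradingdivides}: if $g$ is homogenous and $\lm g$ divides a monomial $m$, then $m=c\,\lm g$ for a monomial $c$, whence $|m|=|c|*|g|\ge|g|$. I would apply it twice. First, to S-polynomials: if $f,g$ are homogenous and $L=\mathrm{lcm}(\lm f,\lm g)$, then $\spoly(f,g)=\frac{L}{\lt f}f-\frac{L}{\lt g}g$ is homogenous of degree $|L|$, and since $\lm f\mid L$ and $\lm g\mid L$ we get $|f|,|g|\le|L|$; so an S-polynomial of multidegree $d$ can only come from a pair of generators both of degree $\le d$. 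Second, to reduction: a reduction step on a homogenous polynomial $h$ of degree $d$ replaces $h$ by $h-\frac{cm}{\lt g}g$ where $g$ is a generator, $m$ is a monomial of $h$ (hence $|m|=d$) and $\lm g\mid m$; thus $|g|\le|m|=d$, so only generators from $\langle d\rangle$ are used, and $\frac{cm}{\lt g}g$ is homogenous of degree $|m/\lm g|*|g|=|m|=d$, so the polynomial being reduced stays homogenous of degree $d$ at every step. Iterating, the entire reduction of a degree-$d$ S-polynomial --- and therefore its remainder, a new degree-$d$ basis element, and the degree-$d$ critical pairs it spawns --- is carried out without reference to any generator outside $\langle d\rangle$. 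Combining this with the first application proves the proposition.

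I do not expect a genuine obstacle, since the body of the proof is just the two uses of the displayed monotonicity fact. The only point that needs care is the bookkeeping convention: one must make precise that ``reduction of a given multidegree'' means the stage of the multigraded Buchberger algorithm at which all S-polynomials of that degree are formed and reduced to produce the basis elements of that degree, and one should note that this stage can indeed be isolated --- processing degree $d$ never reaches back to an S-polynomial or a reducer of strictly larger degree --- which is precisely what the two observations above guarantee.
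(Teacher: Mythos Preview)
Your proposal is correct and follows essentially the same approach as the paper: the core step in both is the single monotonicity observation (Proposition~\ref{prop:gradingdivides}) applied to a reduction step, yielding $|q_j|\le|p|$ whenever $\lm q_j\mid\lm p$. The paper's proof is in fact terser than yours --- it records only that one application --- while you additionally spell out the S-polynomial formation case and the preservation of degree under reduction; these extra points appear in the paper as remarks immediately following the proof rather than inside it, so your write-up simply folds that surrounding discussion into the proof itself.
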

\begin{proof}
  We recall that the reduction of a polynomial $p$ with respect to
  polynomials $q_1,\dots,q_k$ is given by computing
  \[
  p' = p-\frac{\gcd(\lm p, \lm q_j)}{\lm p}q_j
  \]
  for a polynomial $q_j$ such that $\lm q_j|\lm p$. We note that by
  Proposition \ref{prop:gradingdivides}, this implies $|\lm
  q_j|\leq|\lm p|$ and thus $|q_j|\leq|p|$.
\end{proof}

We note that Proposition \ref{prop:reductiondepends} implies that if
two S-polynomials are incomparable to each other, then their
reductions against a common generating set are completely independent
of each other. Furthermore, since $|p'| = |p|$, in the notation of the
proof of Proposition \ref{prop:reductiondepends}, a reduction of an
incomparable S-polynomial can never have an effect on the future
reductions of any given S-polynomial.

Hence, once S-polynomials have been generated, their actual reductions
may be computed independently across antichains in the partial order of
multidegrees, and each S-polynomial only has to be reduced against the
part of the Gröbner basis that resides below it in degree.

\section{Algorithms}
\label{sec:algorithms}

The arguments from Section \ref{sec:mult-rings-grad} lead us to an
approach to parallelization in which we partition the S-polynomials
generated by their degrees, pick out a minimal antichain, and
generate one computational task for each degree in the antichain.

One good source for minimal antichains, that is guaranteed to produce
an antichain, though most often will produce more tasks than are
actually populated by S-polynomials is to consider the minimal total
degree for an unreduced S-polynomial, and produce as tasks the
antichain of degrees with the same total degree.

Another, very slightly more computationally intense method is to take
all minimal occupied degrees. These, too, form an antichain by
minimality, and are guaranteed to only yield as many tasks as have
content.

Either of these suggestions leads to a master-slave distributed
algorithm as described in pseudocode in Algorithms \ref{alg:master} and
\ref{alg:slave}.

The resulting master node algorithm can be seen in Algorithm
\ref{alg:master}, and the simpler slave node algorithm in Algorithm \ref{alg:slave}.
\begin{algorithm}
\begin{algorithmic}
\LOOP
    \IF {have waiting degrees and waiting slaves}
        \STATE \texttt{nextdeg} $\leftarrow$ \texttt{pop(waiting degrees)}
        \STATE \texttt{nextslave} $\leftarrow$ \texttt{pop(waiting slaves)}
        \STATE send \texttt{nextdeg} to \texttt{nextslave}
    \ELSIF {all slaves are working}
        \STATE wait for message from slave \texttt{newslave}
        \STATE \texttt{push(newslave, waiting slaves)}
    \ELSIF {no waiting degrees and some working slaves}
        \STATE wait for message from slave \texttt{newslave}
        \STATE \texttt{push(newslave, waiting slaves)}
    \ELSIF {no waiting degrees and no working slaves}
        \STATE generate new antichain of degrees
        \IF {no such antichain available}
            \STATE finish up
        \ENDIF
    \ELSE 
        \STATE \textbf{continue}
    \ENDIF
\ENDLOOP
\end{algorithmic}
  \caption{Master algorithm for a distributed Gröbner basis
    computation}
  \label{alg:master}
\end{algorithm}
\begin{algorithm}
\begin{algorithmic}
\LOOP
   \STATE receive message \texttt{msg} from master
    \IF {\texttt{msg} = \texttt{finish}}
        \STATE {\bfseries return}
    \ELSIF {\texttt{msg} = new degree $d$}
        \STATE reduce all S-polynomials in degree d and
               append to Gr\"obner basis
        \STATE compute new S-polynomials based on new basis elements
        \STATE send \texttt{finished degree} to master
    \ENDIF
\ENDLOOP
\end{algorithmic}
  \caption{Slave algorithm for a distributed Gröbner basis
    computation}
  \label{alg:slave}
\end{algorithm}

\section{Experiments}
\label{sec:experiments}

We have implemented the master-slave system described in Section \ref{sec:algorithms} in Sage \cite{sage}, using MPI for Python~\cite{mpi4py,mpi4py2} for distributive computing infrastructure and SQLAlchemy \cite{sqlalchemy} interfacing with a MySQL database \cite{mysql} for an abstraction of a common storage for serialized python objects.

In order to test our implementation, we have used a computational
server running 8 Intel Xeon processors at 2.83 GHz, with a 5M cache,
and a total RAM available of 16G.

We have run test with the Gröbner basis problem $I_3$, and recorded total
wallclock timings, as well as specific timings for the S-polynomial
generation and reduction steps. The problem was run
for each possible number of allocated core (1 to 7 slave processors),
and the server was the entire time otherwise un-utilized.

\begin{figure}[h]
  \centering
  \includegraphics{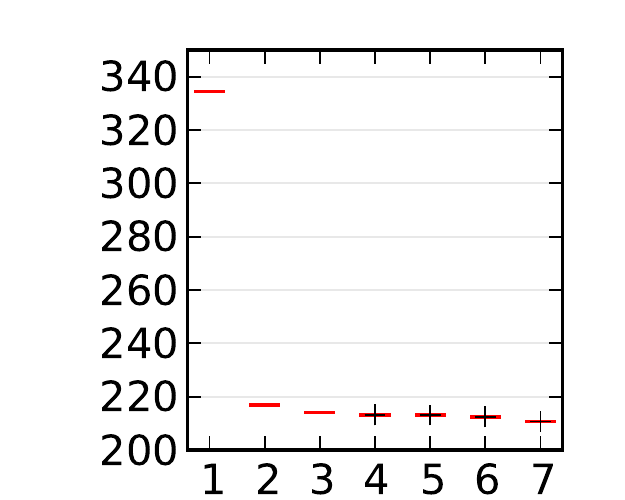}~\includegraphics{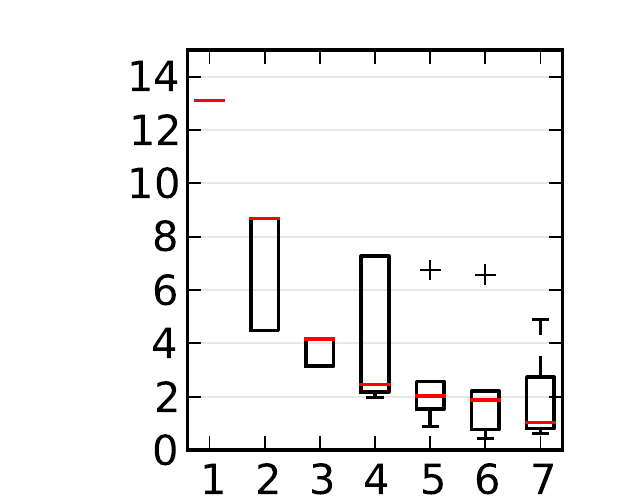}\\
  \includegraphics{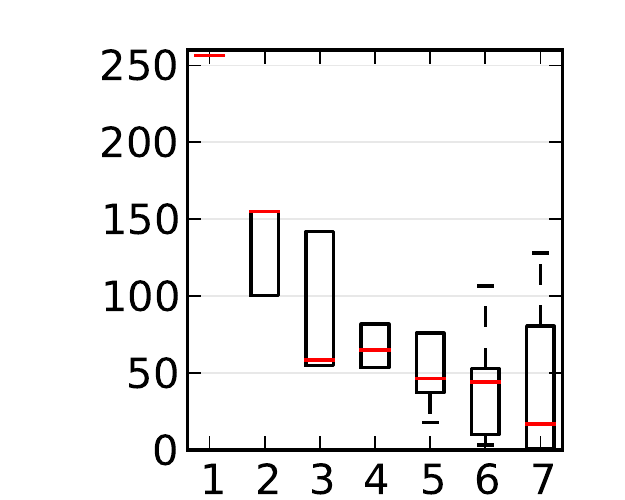}~\includegraphics{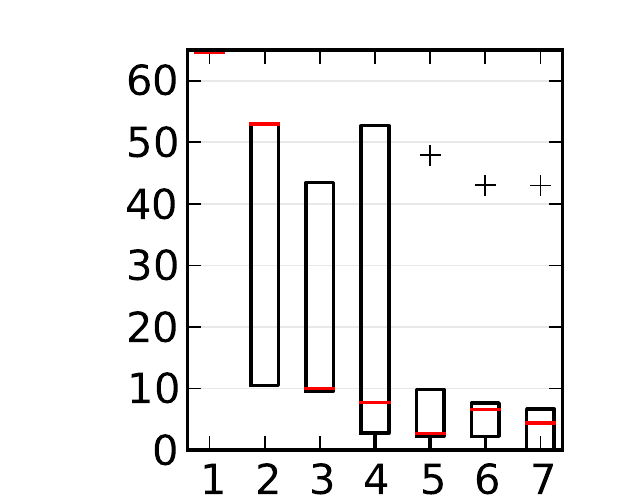}
  \caption{Timings (seconds) for $I_3$ on an 8-core computational server. Timing
    runs were made with between 2 and 8 active processors, and
    the total wallclock times (top left), SQL interaction times (top
    right, the S-polynomial reduction times
    (bottom left), and the S-polynomial generation times (bottom right) were measured.}
  \label{fig:i3timings}
\end{figure}

\begin{figure}[h]
  \centering
  \includegraphics{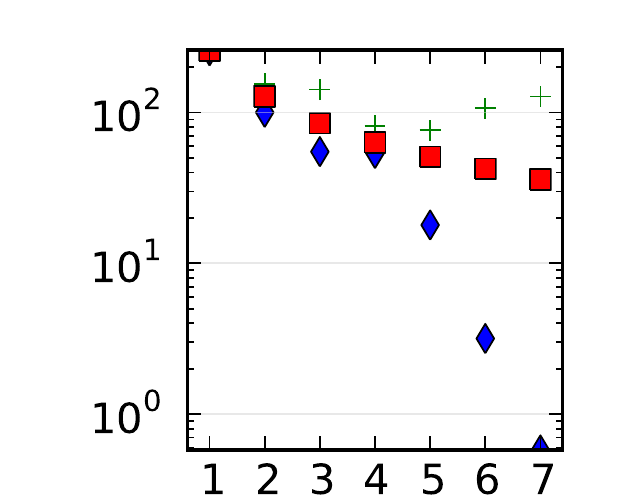}~\includegraphics{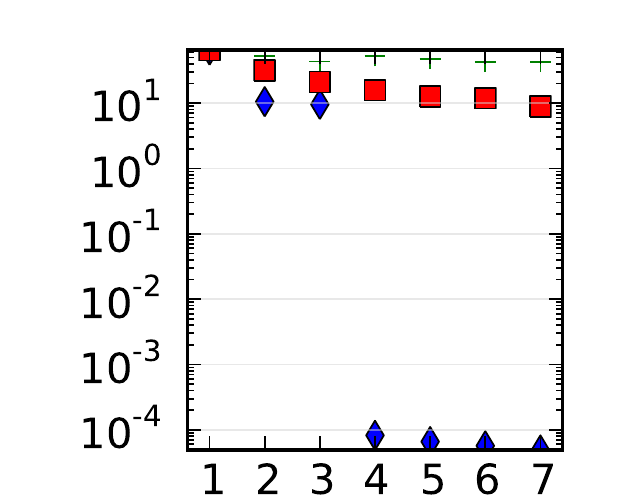}
  \caption{Logarithms of maximal, minimal and average timings (seconds) for
    reduction (left) and
    generation (right) in the $I_3$ computations.}
  \label{fig:i3linreg}
\end{figure}

As can be seen in Figure \ref{fig:i3timings}, parallelization
decreases the wall-clock timings radically compared to single-core
execution (2 processors, with the slave processor doing all work
essentially serially). However, the subsequent decrease in
computational times is less dramatic.

Looking into specific aspects of the computation, we can see that
while the average computational times decrease radically with the
number of available processors, the maximal computation time behaves
much worse. With the reduction step, maximal computation times still
decrease, mostly, with the number of available processors. The
S-polynomial generation step however displays almost constant maximal
generation times along the computation.

Furthermore, compared to the time needed for the algebraic
computations, the relatively slow, database engine mediated storage
and recovery times are almost completely negligible.

These trends are even more clear when we concentrate on only the
maximal, minimal and average computation times, as in Figure
\ref{fig:i3linreg}. We see a proportional decrease in average
computation times, and a radical drop-off in minimal computation
times, which certainly sounds promising. The global behaviour,
however, is dictated by the maximal thread execution times, which are
rather disappointingly behaving throughout.

\section{Conclusions and Future Work}

In conclusion, we have demonstrated that while the parallel
computation of Gröbner bases in general is a problem haunted by the
ghost of data dependency, the lattice structure in an appropriate
choice of multigrading will allow for easy control of
dependencies. Specifically, picking out antichains in the multigrading
lattice gives a demonstrable parallelizability, that saturates the
kind of computing equipment that is easily accessible by researchers
of today.

Furthermore, we have developed our methods publically
accessible,\footnote{\url{http://code.google.com/p/dph-mg-grobner}, the
code used for Section \ref{sec:experiments} can be found in the \texttt{sage}
subdirectory} and released it under the very liberal BSD
license. Hence, with the ease of access to our code and to the Sage
computing system, we try to set the barrier to build further on our
work as low as we possibly can.

However, the techniques we have developed here are somewhat sensitive
to the distribution of workload over the grading lattice: if certain
degrees are disproportionately densely populated, then the
computational burden of an entire Gröbner basis is dictated by the
essentially \emph{serial} computation of the highly populated
degrees. As such, we suspect these methods to work at their very best
in combination with other parallelization techniques.

The Gröbner basis implementation used was a rather naïve one, and we
fully expect speed-ups from sophisticated algorithms to combine
cleanly with the constructions we use. This is something we expect to
examine in future continuation of this project.

There are many places to go from here. We are ourselves interested in
investigating many avenues for the further application of the basic ideas
presented here:
\begin{itemize}
\item Adaptation to state-of-the-art Gröbner basis techniques for
  single processors. Improve the handling of each separate degree,
  potentially subdividing work even finer.
\item Multigraded free modules, and Gröbner bases
  of these; opening up for the use of these methods in computational
  and applied topology, as a computational back bone for multigraded
  persistence.
\item Multigraded free resolutions; opening up for the application of these methods in
  parallelizing computations in homological algebra.
\item Adaptation to non-commutative cases; in particular to use for
  ideals in and modules over quiver algebras.
\item Building on work by Dotsenko and Khoroshkin, and by Dotsenko and
  Vejdemo-Johansson, there is scope to apply this parallelization to
  the computation of Gröbner bases for operads. \cite{dotsenko-khoroshkin,dotsenko-vj}
\end{itemize}

\bibliographystyle{splncs}
\bibliography{dphgrobner}

\end{document}